\theoremstyle{plain}
\newtheorem{theorem}{Theorem}[section]
\newtheorem{lemma}[theorem]{Lemma}
\newtheorem{proposition}[theorem]{Proposition}
\theoremstyle{definition}
\newtheorem{definition}[theorem]{Definition}
\newtheorem{example}[theorem]{Example}
\theoremstyle{remark}
\newtheorem{remark}{Remark}
\newcommand{\head}[0]{\mathfrak{h}}
\newcommand{\tail}[0]{\mathfrak{t}}
\newcommand{\inv}[0]{^{-1} }
\newcommand{\arc}[0]{\mathcal{A}}
\newcommand{\mat}[0]{\mbox{\rm Mat}}
\newcommand{\trace}[0]{\mbox{\rm tr}}
\title {The Ihara expression of a generalization \\ of the weighted zeta function on a finite digraph}
\author{Ayaka Ishikawa \\
Faculty of Engineering \\
Yokohama National University \\ 
Hodogaya, Yokohama, 240-8501, JAPAN
}
\date{}
\begin{document}
\maketitle

\begin{abstract}
We define a new weighted zeta function for a finite digraph
and obtain its determinant expression called the Ihara expression.
The graph zeta function is a generalization of the weighted graph zeta function introduced in previous research.
That is, our result makes it possible to derive the Ihara expressions of the previous graph zeta functions for any finite digraphs.
\end{abstract}

\section{Introduction}

A graph zeta function is an analogue of the Selberg zeta function for a graph.
Generally, it is defined as an exponential of a generating function or an Euler product, called the exponential expression and the Euler expression, respectively.
A graph zeta function also has a determinant expression called the Hashimoto expression written by an edge matrix of a graph.
If a graph zeta function satisfies some conditions,  particularly the adjacency condition, then the graph zeta function always has the three expressions \cite{morita2020ruelle}.
The Ihara expression is the determinant expression written by the weighted adjacency matrix and the weighted degree matrix.
Although the conditions under which a graph zeta function has the Ihara expression are unknown,
it has already been confirmed that the generalized weighted zeta function has the Ihara expression \cite{IDE2021227, ishikawa21}.
Since most graph zeta functions in previous studies are special cases of the generalized weighted zeta function,
the Ihara expression of the generalized weighted zeta function has been recognized as the general form in the Ihara expressions.

The Ihara expression helps derive the eigenvalues of quantum walk transition matrices.
A quantum walk is the quantum version of a random walk studied in various fields:
 quantum algorithm, financial engineering, and laser isotope separation, for example (see, e.g., \cite{matsuoka2011theoretical, Orrell_2020, Portugal_2018}).
Some behavior of a walker, such as periodicity and localization, are derived using the eigenvalues of the transition matrix.
In particular, the Sato zeta function \cite{sato2007new} gives the characteristic polynomial of the transition matrix of the Grover walk \cite{grover1966fast}.
Moreover, Konno and Sato obtained the spectrum of the Grover transition matrix by transforming the Sato zeta function as the Ihara expression \cite{konno2012relation}.
The Grover walk has a generalized model called the Szegedy walk \cite{szegedy2004quantum}.
The graph zeta function that can give the eigenfunction of the Szegedy transition matrix was introduced by 
Ishikawa and Konno \cite{ishikawa22}.
Although it is a generalization of the Sato zeta function, it is not a particular case of the generalized weighted zeta function.
For a symmetric digraph corresponding to a graph,
the Ihara expression is the same as that for the generalized weighted zeta function.
However, for a general digraph, the Ihara expression differs from any previous graph zeta functions.

In this paper, we define a new graph zeta function.
It is a generalization of the Sato zeta function and Ishikawa-Konno's zeta function.
We also derive the Ihara expression with a standard form regardless of whether the graph is symmetric.
The Ihara expression is a general form of the Ihara expression of a graph zeta function.

The rest of the paper is organized as follows.
Section \ref{sec:pre} defines notations associated with graphs and our graph zeta function.
The zeta function is introduced as an exponential expression, and we confirm that the exponential expression equals the Euler and the Hashimoto expressions.
We show the Ihara expression in Section \ref{main}, which is the main theorem of our paper.

Throughout this paper, 
graphs (resp. digraphs) are finite, and multi-edge (resp. multi-arcs) and multi-loops are allowed.
We use the following symbols.
For positive integers $m$ and $n$, let $\mat(m,n;\mathbb{C})$ be the set of $m\times n$ matrices over $\mathbb{C}$.
An $n$-square matrix with all one denotes by $\mathbbm{1}_{n}$.
For a proposition $P$,
we define $\delta_{P}$ as follows: $\delta_{P}=1$ if $P$ is true, $\delta_{P}=0$ if $P$ is false.
Let $\overline{\delta}_{uv}$ be a function such that $\overline{\delta}_{uv}=1$ if $u\neq v$, $\overline{\delta}_{uv}=0$ otherwise.

\section{Preliminary}\label{sec:pre}
\noindent
A {\it graph} $G=(V,E)$ is a pair of {\it vertex} set $V$ and {\it edge} set $E$.
	The edge set $E$ is a multiset of $2$-multisubsets of $V$.
If both $V$ and $E$ are finite, then $G$ is called {\it finite}.
We call an edge $e=\{v,v\}$ a loop.
The number $\deg(u) := \#\{ \{u,v\} \in E | v\in V \}$ is called the {\it degree} of $u$.
If there is at most one edge between every two vertices and there are no loops,
then the graph is called {\it simple}.
	Let $\arc$ be a multiset of ordered pairs of two vertices (possibly same).
We call the pair $\Delta =(V,\arc)$ a {\it digraph} and an element of $\arc$ an {\it arc}.
For an arc $a=(u,v)$, 
$u$ and $v$ are called the {\it tail} and the {\it head} of $a$ denoted by $\tail(a)$ and $\head(a)$, respectively.
For two vertices $u,v\in V$,  
let $\arc_{uv}, \arc_{u*}, \arc_{*v},\arc(u,v)$ be the subsets of arc set such that
$\arc_{uv}:= \{ a\in\arc \ | \ \tail(a)=u, \head(a)=v \}$, 
$\arc_{u*} := \{ a\in\arc | \tail(a)=u\}$, 
$\arc_{*v} := \{ a\in\arc \ | \ \head(a)=v\}$
and $\arc(u,v) := \arc_{uv}\cup \arc_{vu}$.
We fix a total order $<$ on $V$, and if one writes $\arc(u,v)$, then the condition $u<v$ is always assumed.
For example, in Figure \ref{fig:digraph1}, we have
$\arc_{v_1,v_2}= \{ a_{21},a_{22} \}$ and $\arc_{v_1,v_2}= \{ a_{23},a_{24} \}$.
Thus, $\arc(v_1,v_2)=\{ a_{21},a_{22}, a_{23},a_{24} \}$ holds.

For a graph $G=(V,E)$, let $\arc(G):=\{  a_e=(u,v), a_e'=(v,u) | e=\{ v,u \}\in E \}$, and then the digraph $\Delta(G)=(V,\arc(G))$ is called the {\it symmetric digraph} of $G$.
	For the arcs $a_e, a_e'\in\arc(G)$ corresponding to an edge $e\in E$, let $\overline{a_e}$ (resp. $\overline{a_e'}$) denote $a_e'$ (resp. $a_e$).

On a digraph $\Delta$, for an arc $a$, let $a\inv$ denote the set of {\it inverse} arcs of $a$.
We define $a\inv$ as $a\inv := \{ \overline{a} \}$ if $\Delta$ is the symmetric digraph of a graph, otherwise $a\inv :=\arc_{\head(a)\tail(a)}$.
Note that for a loop $a\in\arc_{vv}$ on a digraph $\Delta$ with $\Delta\neq\Delta(G)$, we have $a\inv =\arc_{vv}$, and $a\inv$ includes $a$ itself.

Let $\Phi_\Delta$ be the set $\{ (u,v) \in V\times V \ | \ u\le v, \arc(u,v)\neq \emptyset \}$.
For an arc $a\in \arc_{uv}$, let $\arc[a]$ denote the set of arcs that have inverse arcs in common with $a$.
That is, any two arcs $a',a''\in\arc[a]$ satisfy ${a'}\inv = {a''}\inv$.
One can see that $\arc[a]=\{ a \}$ always holds for any arc $a$ if $\Delta=\Delta(G)$.
On the other hand, if $\Delta\neq \Delta(G)$, then $\arc[a]=\arc_{\tail(a)\head(a)}$ holds.
We assume that $\arc_{uv}\neq \emptyset$.
Let $B_{uv}$ be the subset of $\arc_{uv}$ satisfying 
$a\inv \cap {a'}\inv =\emptyset$ for any $a,a'\in B_{uv}$ 
and $\arc_{uv}=\sqcup_{a\in B_{uv}} \arc[a]$.
Then, $\arc_{vu}$ is written by $\arc_{vu}=\sqcup_{a\in B_{uv}} a\inv$.
For $a\in\arc$, let $\arc(a)$ denote the set $\arc[a]\cup a\inv$.
One can see that $\arc(u,v)$ is written by $\sqcup_{a\in B_{uv}}\arc(a)$.
For each $(u,v)\in\Phi_\Delta$, let $B(u,v):=B_{uv}$ if $\arc_{uv}\neq \emptyset$, $B(u,v):=B_{vu}$ otherwise.
Then, the following holds:
$$
	\arc = \sqcup_{(u,v)\in\Phi_\Delta} \arc(u,v) 
	= \sqcup_{(u,v)\in\Phi_\Delta} \sqcup_{a\in B(u,v)} \arc(a).
$$
\begin{figure}[h]
\begin{minipage}[b]{0.49\columnwidth}
\center
\includegraphics[width=150pt]{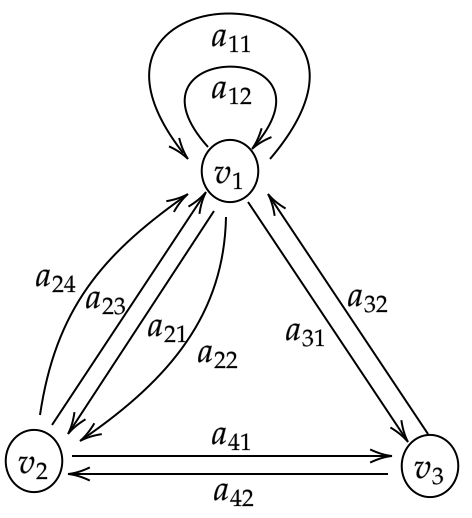}
\label{fig:digraph1}
\caption{$\Delta=(V,\arc)$}
\end{minipage}
\begin{minipage}[b]{0.49\columnwidth}
\center
\includegraphics[width=150pt]{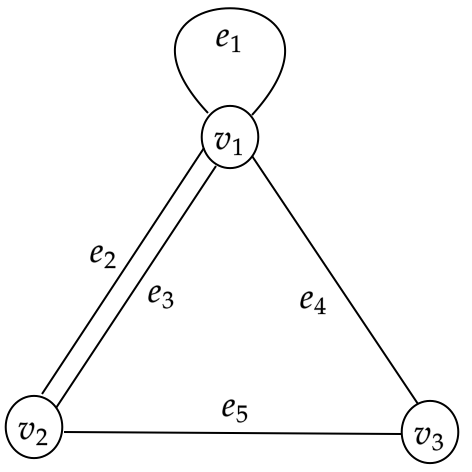}
\label{fig:graph1}
\caption{$G=(V,E)$}
\end{minipage}
\end{figure}

Let us explain the symbols with an example.
\begin{example} \label{exm:1}
We consider the digraph $\Delta$ in Figure \ref{fig:digraph1}.
Let assume a total order $<$ on $V$ as $v_1<v_2<v_3$.
We have $\Phi_\Delta =\{ (v_1,v_1), (v_1,v_2), (v_1,v_3), (v_2,v_3)\}$.
If we regard $\Delta$ as a usual digraph, not a symmetric digraph of any graph,
then $a\inv = \arc_{vu}$ holds for any arc $a\in_{uv}$.
We have 
$$
	\arc[a_{11}]=\arc[a_{12}]=\arc_{v_1 v_1}, \quad \arc[a_{21}]=\arc[a_{22}]=\arc_{v_1v_2},
$$ etc.
It follows that
\begin{align*}
	&\arc(a_{11})=\arc(a_{12})=\arc[a_{11}]\cup a_{11}\inv =\arc(v_1,v_1),\\
	&\arc(a_{21})=\arc(a_{22})=\arc[a_{21}]\cup a_{21}\inv =\arc_{v_1 v_2}\cup \arc_{v_2 v_1}=\arc(v_1,v_2), \\
	&\arc(a_{31})=\arc[a_{31}]\cup a_{31}\inv =\arc_{v_1v_3}\cup\arc_{v_3v_1}=\arc(v_1,v_3), \\
	&\arc(a_{41})=\arc[a_{41}]\cup a_{41}\inv =\arc_{v_2v_3}\cup\arc_{v_3v_2}=\arc(v_2,v_3).
\end{align*}
Since $\arc_{v_iv_j}\neq \emptyset$ for each $(v_i,v_j)\in\Phi_\Delta$, $B(v_i,v_j)=B_{v_iv_j}$ holds.
We assume that 
$$
	B_{v_1v_1}=\{ a_{11} \}, \quad B_{v_1v_2}=\{ a_{21} \}, \quad B_{v_1v_3}=\{ a_{31} \}, \quad B_{v_2v_3}=\{ a_{41} \}.
$$
The arc set $\arc$ is partitioned as follows:
\begin{align*}
	\arc &= \arc(v_1,v_1)\sqcup\arc(v_1,v_2)\sqcup\arc(v_1,v_3)\sqcup\arc(v_2,v_3) \\
	&= \arc(a_{11})\sqcup\arc(a_{21})\sqcup\arc(a_{31})\sqcup\arc(a_{41}).
\end{align*}

\end{example}

\begin{example}\label{exm:2}
On the other hand, we can regard $\Delta$ as a symmetric digraph of $G$ in Figure \ref{fig:graph1}.
In particular, we assign $e_2$ to $\{ a_{21},a_{23} \}$ and $e_3$ to $\{ a_{22},a_{24} \}$.
It follows that
$
	B_{v_1v_2}=\{ a_{21},a_{22} \}
$
and the other $B_{v_iv_j}$ are the same as above.
Thus, the arc set $\arc$ is given by
$$
	\arc = \arc(a_{11})\sqcup\arc(a_{21})\sqcup\arc(a_{22})\sqcup\arc(a_{31})\sqcup\arc(a_{41}).
$$
\end{example}

A sequence of arcs $p=(a_i)_{i=1}^{k}$ is a {\it path} 
if it satisfies $\head(a_i) = \tail(a_{i+1})$ for each $i=1,2,\ldots, k-1$.
The number $k$, called the {\it length} of $p$, is denote by $|p|$.
If $\head(a_k)=\tail(a_1)$, then the path $p$ is called {\it closed}.
Let $X_k$ denote the set of closed paths of length $k$.
For $C\in X_k$,
we denote by $C^n$ the closed path that connects $C$ $n$ times.
It is called the $n$-th power of $C$.
If $C$ cannot be expressed as a power of a closed path shorter than $C$, then it is called {\it prime}.
For $C=(c_i)_{i=1}^k, C'=(c_i')_{i=1}^k \in X_k$, 
if there exists an integer $n$ such that $c_i=c'_{i+n}$ for any $i$, where the indices are taken modulo $k$,
then we denote the relation by $C\sim C'$.
The relation is an equivalence relation.
An equivalence class is called a {\it cycle}, and we denote by $[C]$ the equivalence class of a closed path $C$.
Since any closed paths in $[C]$ have the same length, 
we define the {\it length} of $[C]$ to be the length of a closed path in $[C]$.
We denote by $|C|$ the length of $[C]$.
A cycle is {\it prime} if a closed path in the cycle is prime.
We denote the set of prime cycles by $\mathcal{P}$.


\subsection{A new graph zeta function}

Let $\Delta=(V,\arc)$ be a digraph.
For any maps $\tau_1, \tau_2, \upsilon_1, \upsilon_2 : \arc \to \mathbb{C}$,
let $\tau$ and $\upsilon$ be maps $\arc\times \arc \to \mathbb{C}$ defined by $\tau(a,a') := \tau_1(a)\tau_2(a')$ and 
$\upsilon(a,a')=\upsilon_1(a)\upsilon_2(a')$.
We define a map $\theta : \arc\times \arc \to \mathbb{C}$ as
\begin{align} \label{def:weight}
	\theta(a,a') := \tau(a,a')\delta_{\head(a)\tail(a')} - \upsilon(a,a')\delta_{a'\in a\inv}, 
\end{align}
where 
$\delta_{\head(a)\tail(a')}$ is the Kronecker delta.
For a closed path $C=(c_i)_{i=1}^k\in X_k$, 
let ${\rm circ}_\theta (C)$ denote the circular product 
$
	\theta(c_1,c_2) \theta(c_2,c_3) \ldots \theta(c_k,c_1).
$
Note that ${\rm circ}_\theta(C)={\rm circ}_\theta(C')$ holds if $C\sim C'$.
Let $N_k({\rm circ}_\theta) := \sum_{C\in X_k} {\rm circ}_\theta(C)$.

\begin{definition}
A graph zeta function for $\Delta$ is the following formal power series:
	\begin{align}\label{ident:zeta}
		Z_\Delta(t;\theta) := \exp \left(\sum_{k\ge 1}\frac{N_k({\rm circ}_\theta)}{k}t^k \right).
	\end{align}
\end{definition}

The map $\theta$ is called the {\it weight} of $Z_\Delta(t;\theta)$,
and the formal power series expression is called the {\it exponential expression} \cite{morita2020ruelle}.
Let 
\begin{align*}
	E_\Delta (t;\theta) := \prod_{[C]\in\mathcal{P}} \frac{1}{1-{\rm circ}_\theta (C)t^{|C|}}, \qquad 
	H_\Delta (t;\theta) := \frac{1}{\det(I-tM_\theta)},
\end{align*}
where $M_\theta=(\theta(a,a'))_{a,a'\in\arc}$.
The expressions $E_\Delta(t;\theta)$ and $H_\Delta(t;\theta)$ are called
the {\it Euler expression} and the {\it Hashimoto expression},
respectively (cf. \cite{morita2020ruelle}).

\begin{proposition}
	If $\theta : \arc\times\arc \to \mathbb{C}$ satisfies the condition 
	$$
		\theta(a,a') \neq 0 \Rightarrow \head(a)=\tail(a'),
	$$
	then $Z_\Delta(t;\theta)= E_\Delta (t;\theta)=H_\Delta (t;\theta)$.
\end{proposition}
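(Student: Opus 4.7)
The plan is to establish two equalities, $Z_\Delta(t;\theta)=H_\Delta(t;\theta)$ and $Z_\Delta(t;\theta)=E_\Delta(t;\theta)$, by standard manipulations that are made possible by the hypothesis on $\theta$.

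For $Z_\Delta=H_\Delta$, the starting observation is that
\[
\mathrm{tr}(M_\theta^k) \;=\; \sum_{(a_1,\ldots,a_k)\in\arc^k} \theta(a_1,a_2)\theta(a_2,a_3)\cdots\theta(a_k,a_1).
\]
Under the hypothesis, any summand with a factor $\theta(a_i,a_{i+1})$ where $\head(a_i)\neq\tail(a_{i+1})$ vanishes; hence the only surviving sequences $(a_1,\ldots,a_k)$ are exactly those forming a closed path, i.e.\ elements of $X_k$. Therefore $\mathrm{tr}(M_\theta^k)=N_k(\mathrm{circ}_\theta)$. Combined with the identity $-\log\det(I-tM_\theta)=\sum_{k\ge 1}\mathrm{tr}(M_\theta^k)t^k/k$, which is valid at the level of formal power series, exponentiation gives $Z_\Delta(t;\theta)=1/\det(I-tM_\theta)=H_\Delta(t;\theta)$.

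For $Z_\Delta=E_\Delta$, I would reorganize $N_k(\mathrm{circ}_\theta)$ according to prime cycles. Every closed path $C\in X_k$ is a power of a unique prime closed path $D$, say $C=D^n$ with $|D|\cdot n=k$, and the equivalence class $[D]$ of length $\ell=|D|$ contains exactly $\ell$ closed paths since $D$ is prime. Taking $n$-th powers gives $\ell$ closed paths of length $\ell n$ in $X_{\ell n}$, each with $\mathrm{circ}_\theta$ equal to $\mathrm{circ}_\theta(D)^n$ (using $\mathrm{circ}_\theta(C)=\mathrm{circ}_\theta(C')$ whenever $C\sim C'$, which is immediate from cyclic invariance of the product). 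Grouping terms,
\[
\sum_{k\ge 1}\frac{N_k(\mathrm{circ}_\theta)}{k}t^k
\;=\; \sum_{[D]\in\mathcal{P}}\sum_{n\ge 1}\frac{|D|\,\mathrm{circ}_\theta(D)^n}{|D|\,n}\,t^{|D|n}
\;=\; -\sum_{[D]\in\mathcal{P}}\log\bigl(1-\mathrm{circ}_\theta(D)\,t^{|D|}\bigr),
\]
and exponentiating recovers the Euler product $E_\Delta(t;\theta)$.

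The main obstacle is the second step: to make the regrouping rigorous I need the fact that $C\mapsto(\text{prime root }D,\text{exponent }n,\text{cyclic shift})$ is a well-defined bijection, which rests on the claim that $[D]$ has cardinality exactly $|D|$ when $D$ is prime. This follows because if a cyclic shift by $r$ positions fixes $D$ then $D$ is the $(|D|/\gcd(|D|,r))$-th power of a shorter closed path, contradicting primeness unless $r\equiv 0\pmod{|D|}$. The first equality $Z_\Delta=H_\Delta$ is essentially automatic once the hypothesis is used to kill the non-path sequences, so that part poses no real difficulty.
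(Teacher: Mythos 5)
Your proof is correct and complete. Note, however, that the paper does not actually prove this proposition: its ``proof'' is a one-line citation to Morita's paper on Ruelle zeta functions for finite digraphs, where the adjacency condition is introduced and this three-way equality is established. So there is no in-paper argument to compare against; what you have written is the standard (and almost certainly the cited) argument. Both halves check out: the identity $\trace(M_\theta^k)=N_k({\rm circ}_\theta)$ holds because the adjacency condition kills every summand indexed by a sequence that is not a closed path (including the wrap-around factor $\theta(a_k,a_1)$, which forces closedness), and the formal-power-series identity $-\log\det(I-tM_\theta)=\sum_{k\ge1}\trace(M_\theta^k)t^k/k$ then gives $Z_\Delta=H_\Delta$. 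For $Z_\Delta=E_\Delta$, your justification of the key combinatorial fact --- that a prime closed path of length $\ell$ has trivial cyclic stabilizer, hence its cycle class has exactly $\ell$ representatives, and that every closed path is uniquely a power of a prime one --- is exactly what is needed to make the regrouping into $-\sum_{[D]\in\mathcal{P}}\log\bigl(1-{\rm circ}_\theta(D)t^{|D|}\bigr)$ legitimate. The only point you leave tacit is that each coefficient of $t^k$ in the Euler product involves only the finitely many prime cycles of length at most $k$, so the infinite product is well defined as a formal power series; this is immediate since $\arc$ is finite.
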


\begin{proof}
	See \cite{morita2020ruelle}.
\end{proof}

The above condition for $\theta$ is called the {\it adjacency condition} \cite{morita2020ruelle}.
If $\theta(a,a')\neq 0$ for $a,a'\in\arc$, 
then $\delta_{\head(a) \tail(a')} =1$ holds.
Thus, the weight $\theta$ satisfies the adjacency condition,
and 
we can see $Z_\Delta(t;\theta)= E_\Delta (t;\theta)=H_\Delta (t;\theta)$.

\begin{remark}
	We assume that $\tau_1(a)=\upsilon_1(a)=1$ for any $a\in\arc$,
	then $Z_\Delta(t;\theta)$ is the generalized weighted zeta function \cite{morita2020ruelle}.
\end{remark}

\section{Main theorem} \label{main}

\subsection{Auxiliary results}

Before describing the main theorem, we will show some lemmas.

\begin{lemma}\label{lem:I+tJ(u,u)}
	Let $M\in \mat(k,k;\mathbb{C})$ whose the $(i,j)$-entry is $m_1(i)m_2(j)$
	and $\mu := \trace (M)$.
	For a variable $t$, $(I+tM)\inv $ is written by
	\begin{eqnarray*}
		(I+tM)\inv = I-t(1-\mu^2t^2)\inv M +t^2(1-\mu^2t^2)\inv M^2,
	\end{eqnarray*}
	and $\det(I+tM) = 1+\mu t$ holds.
\end{lemma}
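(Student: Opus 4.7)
The key observation is that $M$ is a rank-one matrix: writing $u = (m_1(i))_i$ and $v = (m_2(j))_j$ as column vectors, we have $M = u v\trans$. From this everything else follows quickly, because
\[
M^2 = u (v\trans u) v\trans = \mu M,
\]
using $v\trans u = \sum_i m_1(i) m_2(i) = \trace(M) = \mu$. By induction, $M^n = \mu^{n-1} M$ for every $n \ge 1$.

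My plan for the inverse is to verify the stated identity by direct multiplication rather than guessing it. Multiplying $(I + tM)$ into the proposed right-hand side, the two $M^2$ contributions cancel immediately; using $M^3 = \mu^2 M$, the remaining $M$-coefficient is
\[
1 - (1-\mu^2 t^2)\inv + \mu^2 t^2 (1-\mu^2 t^2)\inv = 1 - (1-\mu^2 t^2)(1-\mu^2 t^2)\inv = 0,
\]
and the identity matrix terms survive, giving $I$. (Alternatively, one can derive the formula by expanding the Neumann series $\sum_{n \ge 0}(-tM)^n$ and collapsing it via $M^n = \mu^{n-1}M$; the resulting expression is $I - \frac{t}{1+\mu t}M$, and rewriting this by multiplying numerator and denominator by $1-\mu t$ and using $M^2 = \mu M$ produces exactly the form written in the lemma.)

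For the determinant, the cleanest route is Sylvester's determinant theorem applied to $A = tu$ and $B = v\trans$:
\[
\det(I_k + (tu)v\trans) = \det(I_1 + v\trans(tu)) = 1 + t\mu.
\]
Equivalently, since $M$ has rank one with nonzero eigenvalue $\mu$, the spectrum of $I + tM$ consists of $1 + t\mu$ together with $k-1$ copies of $1$, and the determinant is their product.

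No step presents a genuine obstacle; the only thing to be careful about is the bookkeeping in the inverse verification, where one must remember that $M^3 = \mu^2 M$ rather than $\mu M^2$ in order to see the coefficients telescope. Once the rank-one identity $M^2 = \mu M$ is in place, both assertions fall out in a few lines.
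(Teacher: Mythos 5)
Your proof is correct. The core identity $M^2=\mu M$ is the same engine the paper uses, but you deploy it differently in both halves. For the inverse, the paper expands the Neumann series $\sum_{n\ge 0}(-tM)^n$, splits it into odd and even powers, and resums each part to land on the stated form; you instead verify the claimed expression by multiplying it against $I+tM$ and watching the coefficients telescope (correctly noting $M^3=\mu^2M$). Your verification is shorter and sidesteps any question of what the infinite series means, at the cost of not explaining where the formula comes from --- though your parenthetical derivation via $I-\frac{t}{1+\mu t}M$ recovers exactly the paper's route and explains why the author chose the $(1-\mu^2t^2)\inv$ normalization (it matches the bipartite case of Lemma \ref{lem:I+tJ(u,v)}). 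For the determinant, the approaches genuinely diverge: the paper writes $M=D_1\mathbbm{1}_kD_2$, commutes to $D_2D_1\mathbbm{1}_k$, and reduces the resulting matrix by explicit column and row operations, whereas you invoke Sylvester's identity $\det(I_k+(tu)v\trans)=1+tv\trans u$ (or the rank-one spectrum). Your argument is cleaner and makes the rank-one structure transparent; the paper's elimination is more elementary and self-contained. Both are complete and correct.
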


\begin{proof}
For a matrix $M$, we denote by $M_{ij}$ the $(i,j)$-entry of $M$.
	The $(i,j)$-entry of $M^2$ is
		$$
		(M^2)_{i,j} = \sum_{s=1}^k m_1(i)m_2(s) m_1(s)m_2(j) 
		= \mu m_1(i)m_2(j),
		$$
	and we get $M^2= \mu M$.
	Thus, $M^n = \mu^{n-1} M$ holds, and a power series expansion of $(I+tM)\inv$ is as follows:
	\begin{align*}
		(I+tM)\inv &= I +\sum_{n\ge 1}(-tM)^n \\
			&= I + \sum_{n\ge 0}(-tM)^{2n+1} + \sum_{n\ge 1}(-tM)^{2n} \\
			&= I -t \sum_{n\ge 0}(\mu t)^{2n}M + t^2\sum_{n\ge 1}(-\mu t)^{2(n-1)}M^2 \\
			&= I-t(1-\mu^2t^2)\inv M +t^2(1-\mu^2t^2)\inv M^2.
	\end{align*}

Let $D_1$ and $D_2$ be $k{\times}k$ diagonal matrices with $(D_1)_{ii}=m_1(i)$ and $(D_2)_{ii}=m_2(i)$, respectively.
	Since one can see that $M=D_1 \mathbbm{1}_{k} D_2$ holds,
	the determinant $\det(I+tM)$ is given by
	\begin{align*}
		\det(I+tM) 
		&= \det(I+tD_1 \mathbbm{1}_{k} D_2) = \det(I+tD_2D_1 \mathbbm{1}_{k}) \\
		&=
		\begin{vmatrix}
			1+m_1(1)m_2(1) t   &  m_1(1)m_2(1) t &  m_1(1)m_2(1) t & \ldots \\
			m_1(2)m_2(2) t & 1+ m_1(2)m_2(2) t  &  m_1(2)m_2(2) t & \ldots \\
			m_1(3)m_2(3) t &  m_1(3)m_2(3) t & 1+ m_1(3)m_2(3) t & \ldots \\
			\vdots & \vdots &\vdots  & \ddots
		\end{vmatrix}\\
		&=
		\begin{vmatrix}
			1+m_1(1)m_2(1) t   &  -1 &  -1 & \ldots \\
			m_1(2)m_2(2) t & 1  &  0 & \ldots \\
			m_1(3)m_2(3) t &  0 & 1 & \ldots \\
			\vdots & \vdots &\vdots  & \ddots
		\end{vmatrix}\\
		&=
		\begin{vmatrix}
			1+\mu t   &  0 &  0 & \ldots \\
			m_1(2)m_2(2) t & 1  &  0 & \ldots \\
			m_1(3)m_2(3) t &  0 & 1 & \ldots \\
			\vdots & \vdots &\vdots  & \ddots
		\end{vmatrix}\\
		&= 1+\mu t
	\end{align*}
	
\end{proof}

\begin{remark}
A more refined expression of $(I+tM)\inv$ is possible; however, we give the above expression for convenience.
\end{remark}

\begin{lemma}\label{lem:I+tJ(u,v)}
Let $M_1{\;\in\;}\operatorname{Mat}(k,l;\mathbb{C})$ and $M_2{\;\in\;}\operatorname{Mat}(l,k;\mathbb{C})$ with $(M_1)_{ij}=m_1(i)m_2(k+j)$ and $(M_2)_{ij}=m_1(k+i)m_2(j)$, respectively.
	Let $\mu_1$ and $\mu_2$ denote $\sum_{i=1}^k m_1(i)m_2(i)$ and $\sum_{i=k+1}^l m_1(i)m_2(i)$, respectively.
	For a variable $t$ and a matrix
	$
		M := \begin{bmatrix}
			O_k & M_1 \\
			M_2 & O_l
		\end{bmatrix}
	$, the following identity holds:
	\begin{eqnarray*}
		(I+tM)\inv = I-t(1-\mu_1\mu_2 t^2)\inv M +t^2(1-\mu_1\mu_2 t^2)\inv M^2,
	\end{eqnarray*}
	and the determinant $\det(I+tM)$ equals $1-\mu_1\mu_2 t^2$.
\end{lemma}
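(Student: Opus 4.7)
The plan is to imitate the proof of Lemma~\ref{lem:I+tJ(u,u)}, adapting the rank-one collapse to the block anti-diagonal structure of $M$. The key observation is that each of $M_1$ and $M_2$ is itself a rank-one matrix in the sense of that lemma, so products of them collapse in a controlled way.

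First I would compute
$$
M^2 = \begin{bmatrix} M_1 M_2 & O \\ O & M_2 M_1 \end{bmatrix}
$$
by a direct index summation, obtaining $(M_1 M_2)_{ij} = \mu_2\, m_1(i) m_2(j)$ and $(M_2 M_1)_{ij} = \mu_1\, m_1(k+i) m_2(k+j)$. One more such summation yields $M_1 M_2 M_1 = \mu_1 \mu_2 M_1$ and $M_2 M_1 M_2 = \mu_1 \mu_2 M_2$, so that $M^3 = \mu_1 \mu_2 M$. A routine induction then gives
$$
M^{2n+1} = (\mu_1 \mu_2)^n M, \qquad M^{2n+2} = (\mu_1 \mu_2)^n M^2 \qquad (n \geq 0).
$$
Substituting these into the formal power series $(I+tM)\inv = I + \sum_{n\geq 1}(-tM)^n$, splitting odd from even powers, and summing the two resulting geometric series in $\mu_1 \mu_2 t^2$ reproduces the claimed formula for $(I+tM)\inv$, exactly as in Lemma~\ref{lem:I+tJ(u,u)}.

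For the determinant, I would apply the Schur complement formula with the invertible top-left block $I_k$:
$$
\det(I + tM) = \det\begin{bmatrix} I_k & tM_1 \\ tM_2 & I_l \end{bmatrix} = \det(I_l - t^2 M_2 M_1).
$$
Since $(M_2 M_1)_{ij} = \mu_1 m_1(k+i)\, m_2(k+j)$ is of the rank-one form treated in Lemma~\ref{lem:I+tJ(u,u)} with $\trace(M_2 M_1) = \mu_1 \mu_2$, applying that lemma to $M_2 M_1$ at parameter $-t^2$ gives $\det(I_l - t^2 M_2 M_1) = 1 - \mu_1 \mu_2 t^2$, as required.

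The main obstacle is purely bookkeeping: keeping the two index ranges $\{1,\dots,k\}$ and $\{k+1,\dots,k+l\}$ separate and correctly tracking which of $\mu_1, \mu_2$ appears after each block multiplication. There is no genuinely new idea beyond Lemma~\ref{lem:I+tJ(u,u)}; the block anti-diagonal structure just forces us to go one power of $M$ further before the sequence of powers becomes periodic.
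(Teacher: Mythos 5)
Your proposal is correct and follows essentially the same route as the paper: establish the block-diagonal form of $M^2$, show the powers of $M$ become periodic up to factors of $\mu_1\mu_2$ (the paper does this via auxiliary rank-one blocks $M_1'$, $M_2'$ rather than via $M^3=\mu_1\mu_2 M$, but the content is identical), sum the geometric series, and reduce the determinant to $\det(I-t^2M_2M_1)$ by a block triangular factorization before invoking Lemma \ref{lem:I+tJ(u,u)}. No gaps.
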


\begin{proof}
Let $M_1'{\;\in\;}\operatorname{Mat}(k,k;\mathbb{C})$ and $M_2'{\;\in\;}\operatorname{Mat}(l,l;\mathbb{C})$ with $(M_1)_{ij}=m_1(i)m_2(j)$ and $(M_2)_{ij}=m_1(k+i)m_2(k+j)$, respectively.
	Since $M_1M_2 = \mu_2 M_1'$ and $M_2M_1=\mu_1M_2'$ hold,
	we get 
	$$
	 M^2=\begin{bmatrix}
			M_1M_2 & O \\
			O & M_2M_1
		\end{bmatrix}
		=\begin{bmatrix}
			\mu_2M_1' & O \\
			O & \mu_1M_2'
		\end{bmatrix}.
	$$
	It also holds that $(M_1')^n=\mu_1^{n-1}M_1', \ (M_2')^n=\mu_2^{n-1}M_2'$.
	Thus, $M^{2n}$ is written by
	\begin{align*}
	M^{2n}&=\begin{bmatrix}
			\mu_2^n (M_1')^n & O \\
			O & \mu_1^n (M_2')^n
		\end{bmatrix} \\
		& =(\mu_1\mu_2)^{n-1}\begin{bmatrix}
			\mu_2 M_1' & O \\
			O & \mu_1 M_2'
		\end{bmatrix} \\
		&=(\mu_1\mu_2)^{n-1}M^2.
	\end{align*}
	In addition, we have $M_1'M_1 = \mu_1 M_1$ and $M_2'M_2 = \mu_2 M_2$,
	and $M^{2n+1}$ is written by
	\begin{align*}
		M^{2n+1} &=(\mu_1\mu_2)^{n-1}
		\begin{bmatrix}
			O_k & \mu_2 M_1'M_1 \\
			\mu_1 M_2'M_2 & O_l
		\end{bmatrix} \\
		&=(\mu_1\mu_2)^{n-1}
		\begin{bmatrix}
			O_k & \mu_2 \mu_1M_1 \\
			\mu_1 \mu_2 M_2 & O_l
		\end{bmatrix} \\
		&=(\mu_1\mu_2)^n M
	\end{align*}
	Therefore, a power series expansion of $(I+tM)\inv$ is as follows:
	\begin{align*}
		(I+tM)\inv &= I +\sum_{n\ge 0}(-tM)^{2n+1} +\sum_{n\ge 1}(-tM)^{2n}\\
		 &= I -t \sum_{n\ge 0}t^{2n} (\mu_1\mu_2)^n M +t^2\sum_{n\ge 1}t^{2(n-1)}(\mu_1\mu_2)^{n-1}M^2\\
		 &= I-t (1-\mu_1\mu_2 t^2)\inv M +t^2(1-\mu_1\mu_2 t^2)\inv M^2.
	\end{align*}
	Since the matrix $I+tM$ is decomposed as
	$
	\begin{bmatrix}
			I & O \\
			tM_2 & I
		\end{bmatrix}
		\begin{bmatrix}
			I & O \\
			O & I-t^2M_2M_1
		\end{bmatrix}
		\begin{bmatrix}
			I & tM_1 \\
			O & I
		\end{bmatrix}
	$,
	$\det(I+tM) = \det(I-t^2M_2M_1)= \det(I-t^2\mu_1M_2')$ holds.
	Note that the matrix $-t\mu_1M_2'$ is an example of $M$ of Lemma \ref{lem:I+tJ(u,u)},
	and we get 
	$$
		\det(I-t^2\mu_1M_2') = \det(I+t(-t\mu_1M_2')) = 1+ (-\mu_1\mu_2 t)t = 1-\mu_1\mu_2 t^2.
	$$
	
\end{proof}


\subsection{The Ihara expression of the graph zeta function on a finite digraph}
\noindent


We fix a total order $<$ on $V$, and if one writes $\arc(u,v)$, then the condition $u<v$ is always assumed.
Let
$J=(j_{aa'})_{a,a'\in\arc}$,
$K=(k_{av})_{a\in\arc ,v\in V}$ and
$L=(l_{ua'})_{u\in V, a'\in\arc}$
be matrices defined by 
$j_{aa'}=\upsilon(a,a')\delta_{a'\in a\inv}$,
$k_{av}=\tau_1(a)\delta_{\head(a)v}$ and
$l_{ua'}=\tau_2(a')\delta_{u\tail(a')}$.
Recall that $\arc = \sqcup_{(u,v)\in\Phi_\Delta} \sqcup_{a\in B(u,v)} \arc(a)$ holds.
For each $a\in B(u,v)$ of $(u,v)\in\Phi_\Delta$,
let 
$J(\arc(a)) := (j_{aa'})_{a,a'\in\arc(a)}$,
$K(\arc(a)) := (k_{aw})_{a\in\arc(a),w\in V}$ and
$L(\arc(a)) := (l_{wa'})_{w\in V,a'\in\arc(a)}$.
Note that we can choose a total order on $V$, which makes $J$ a diagonal matrix.
We fix such a total order on $V$.
Let $T$ denote the block diagonal matrix $I+tJ$, 
and the diagonal blocks are given by $T(\arc(a)) := I+tJ(\arc(a))$.
Then, one can see that $\det T = \prod_{(u,v)\in\Phi_\Delta}\prod_{a\in B(u,v)} \det T(\arc(a))$ holds.

For a set of arcs $S$,
let $\upsilon(S):= \sum_{a\in S}\upsilon(a,a)$.
We consider the following matrices
\begin{align*}
	A_\Delta(\theta) &= \sum_{(u,v)\in \Phi_\Delta} \sum_{a\in B(u,v)} L(\arc(a)) K(\arc(a)), \\
	D_\Delta(\theta) &= \sum_{(u,v)\in \Phi_\Delta} \sum_{a\in B(u,v)} \frac{L(\arc(a)) J(\arc(a)) K(\arc(a))}{1-\upsilon(\arc[a])\upsilon(a\inv)t^2}, \\
	X_\Delta(\theta) &= \sum_{(u,v)\in \Phi_\Delta} \sum_{a\in B(u,v)} \frac{L(\arc(a)) J(\arc(a))^2 K(\arc(a))}{1-\upsilon(\arc[a])\upsilon(a\inv)t^2}.
\end{align*}

\begin{theorem}
	For a finite digraph $\Delta$,
	the following identity holds:
	$$
		Z_\Delta(t;\theta)\inv = \det(I-tM_\theta)
		= \det T  \det(I-tA_\Delta(\theta) +t^2 D_\Delta(\theta) -t^3X_\Delta(\theta)).
	$$
\end{theorem}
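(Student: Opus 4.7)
The plan is to reduce the statement to a concrete computation of $LT^{-1}K$. The first equality $Z_\Delta(t;\theta)^{-1} = \det(I-tM_\theta)$ is immediate from the proposition on the adjacency condition, since $\theta(a,a')\ne 0$ forces $\head(a)=\tail(a')$. So only the Ihara-type factorisation requires work.

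The first step is to notice that $M_\theta = KL - J$, because $(KL)_{a,a'} = \sum_{v}\tau_1(a)\delta_{\head(a)v}\tau_2(a')\delta_{v\tail(a')} = \tau(a,a')\delta_{\head(a)\tail(a')}$ exactly matches the positive term of $\theta(a,a')$, while $J$ is the negative term. Therefore $I-tM_\theta = T - tKL$. Using the standard Schur-complement identity followed by Sylvester's determinant theorem,
\[
    \det(T-tKL) \;=\; \det T\cdot\det\bigl(I-tT^{-1}KL\bigr) \;=\; \det T\cdot\det\bigl(I_V - tLT^{-1}K\bigr),
\]
so the proof reduces to showing $LT^{-1}K = A_\Delta(\theta) - tD_\Delta(\theta) + t^2 X_\Delta(\theta)$.

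Next I would verify that $J$ is block-diagonal with respect to the partition $\arc = \sqcup_{(u,v)\in\Phi_\Delta}\sqcup_{a\in B(u,v)} \arc(a)$. The non-zero entries $j_{a'a''}$ require $a''\in a'^{-1}$, and a case check using the two definitions of $a'^{-1}$ shows that whenever $a'\in\arc(a)$ one has $a'^{-1}\subset \arc(a)$: if $a'\in\arc[a]$ then $a'^{-1}=a^{-1}$ (symmetric case: $a'^{-1}=\{\overline{a}\}\subset\arc(a)$), and if $a'\in a^{-1}$ then $a'^{-1}=\arc[a]$ (resp.\ $\{a\}\subset\arc(a)$). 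Hence $T$ is block-diagonal with blocks $T(\arc(a))=I+tJ(\arc(a))$, the inverse $T^{-1}$ splits accordingly, and
\[
    LT^{-1}K \;=\; \sum_{(u,v)\in\Phi_\Delta}\sum_{a\in B(u,v)} L(\arc(a))\, T(\arc(a))^{-1}\, K(\arc(a)).
\]

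Finally I would apply the two auxiliary lemmas blockwise. When $u=v$ (loop block), $\arc(a)=\arc_{vv}$ and $J(\arc(a))_{a',a''}=\upsilon_1(a')\upsilon_2(a'')$, so Lemma \ref{lem:I+tJ(u,u)} applies with $\mu = \upsilon(\arc[a])=\upsilon(a^{-1})$; in particular $\mu^2 = \upsilon(\arc[a])\upsilon(a^{-1})$. When $u<v$, ordering the rows and columns so that $\arc[a]$ precedes $a^{-1}$ turns $J(\arc(a))$ into the anti-block-diagonal matrix of Lemma \ref{lem:I+tJ(u,v)} with $\mu_1 = \upsilon(\arc[a])$ and $\mu_2 = \upsilon(a^{-1})$. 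Substituting the closed-form expansions of $T(\arc(a))^{-1}$ given by those lemmas into the block sum yields $LT^{-1}K = A_\Delta(\theta) - tD_\Delta(\theta) + t^2X_\Delta(\theta)$, and multiplying by $-t$ and adding the identity completes the second equality. The main obstacle is Step~2: carefully verifying the block-diagonality of $J$ across the several conventions for $a^{-1}$ (symmetric digraph, loops in a general digraph, and ordinary arcs) so that each block exactly matches the hypotheses of Lemma \ref{lem:I+tJ(u,u)} or Lemma \ref{lem:I+tJ(u,v)}, with the denominator $1-\upsilon(\arc[a])\upsilon(a^{-1})t^2$ appearing uniformly.
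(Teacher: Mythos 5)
Your proposal is correct and follows essentially the same route as the paper's own proof: write $M_\theta=KL-J$, factor $\det(T-tKL)=\det T\det(I-tLT^{-1}K)$ via Sylvester, decompose $T$ into the blocks $T(\arc(a))$, and invert each block with the two auxiliary lemmas to identify $LT^{-1}K$ with $A_\Delta(\theta)-tD_\Delta(\theta)+t^2X_\Delta(\theta)$. The only difference is that you spell out the block-diagonality of $J$ across the various conventions for $a^{-1}$ more explicitly than the paper does, which is a welcome addition rather than a deviation.
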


\begin{proof}
Let $H := (\tau(a,a')\delta_{\head(a)\tail(a')})_{a,a'\in\arc}$, and we have $M_\theta = H-J$.
Since $\tau(a,a')\delta_{\head(a)\tail(a')}=\sum_{v\in V}(\tau_1(a)\delta_{\head(a)v})(\tau_2(a')\delta_{v\tail(a')})$,
it follows that $H=KL$ and
$$
	\det(I-tM_\theta) =\det(I-t(KL-J)) = \det(T-tKL).
$$
Hence we have 
\begin{align*}
	\det(T-tKL) &= \det(T)\det(I-tT\inv KL) \\
	&=\det(T) \det (I-tLT\inv K).
\end{align*}
For the direct sum decomposition $T=\bigoplus_{\substack{(u,v)\in \Phi_\Delta, \\ a\in B(u,v)}}T(\arc(a))$,
we arrange the submatrices $L(\arc(a))$ and $K(\arc(a))$ of $L$ and $K$ in order of submatrices $T(\arc(a))$ of $T$.
Then, $LT\inv K$ is written by
\begin{eqnarray}
	\sum_{(u,v)\in \Phi_\Delta}\sum_{a\in B(u,v)} L(\arc(a))T(\arc(a))\inv K(\arc(a)). \label{eq:LTK}
\end{eqnarray}

Recall that 
$$
\arc(a)=\arc[a]\cup a\inv=\{ a \}\sqcup \{\overline{a} \} 
$$ for $a\in\arc(G)$ on $\Delta(G)$,
and 
$$
	\arc(a)=\arc_{uv}\sqcup\arc_{vu}, \quad \arc(l)=\arc_{ww}\cup\arc_{ww}=\arc_{ww}
$$ for $a\in\arc_{uv}, l\in\arc_{ww}$ on $\Delta$ with $\Delta\neq\Delta(G)$.
For $a\in\arc(G)$ on $\Delta(G)$,
 $J(\arc(a))$ is given by
$$
	J(\arc(a))=\begin{bmatrix} 
		0 & \upsilon(a,\overline{a}) \\
		\upsilon(\overline{a},a) & 0 
	\end{bmatrix}
	=\begin{bmatrix} 
		0 & \upsilon_1(a)\upsilon_2(\overline{a}) \\
		\upsilon_1(\overline{a})\upsilon_2(a) & 0 
	\end{bmatrix},
$$
and from Lemma \ref{lem:I+tJ(u,v)}, the following holds:
\begin{align*}
	T(\arc(a))\inv &= I-t\frac{J(\arc(a)) }{1-\upsilon(a,a)\upsilon(\overline{a},\overline{a})t^2}
		+t^2\frac{J(\arc(a))^2}{1-\upsilon(a,a)\upsilon(\overline{a},\overline{a})t^2} \\
		&= I-t\frac{J(\arc(a)) }{1-\upsilon(\arc[a])\upsilon(a\inv)t^2}
		+t^2\frac{J(\arc(a))^2}{1-\upsilon(\arc[a])\upsilon(a\inv)t^2}.
\end{align*}
On the other hand, for 
$\Delta$,
let $J_{uv}:=(\upsilon(a',a''))_{a'\in\arc_{uv},a''\in\arc_{vu}}$. 
Then, 
$J(\arc(a))$ is given by
$$
	J(\arc(a))=\begin{cases}
		J_{uu} & {\rm if } \ a\in\arc_{uu}, \\
		\begin{bmatrix} 
			0 & J_{uv} \\
			J_{vu} & 0 
		\end{bmatrix} & {\rm otherwise}.
	\end{cases}
$$
In both cases,
from Lemma \ref{lem:I+tJ(u,u)} and Lemma \ref{lem:I+tJ(u,v)}, 
we can write $T(\arc(a))\inv$ as follows:
$$
	T(\arc(a))\inv = I -t\frac{J(\arc(a)) }{1-\upsilon(\arc[a])\upsilon(a\inv)t^2} +t^2\frac{J(\arc(a))^2}{1-\upsilon(\arc[a])\upsilon(a\inv)t^2}.
$$
Regardless of whether $\Delta$ is symmetric or not,
$L(\arc(a))T(\arc(a))\inv K(\arc(a))$ is written by
$$
	L(\arc(a))K(\arc(a)) -t\frac{L(\arc(a))J(\arc(a))K(\arc(a)) }{1-\upsilon(\arc[a])\upsilon(a\inv)t^2} +
		t^2\frac{L(\arc(a))J(\arc(a))^2K(\arc(a))}{1-\upsilon(\arc[a])\upsilon(a\inv)t^2}.
$$
Therefore, from (\ref{eq:LTK}), we have
\begin{align*}
	\det(I-tM_\theta) &= \det(T)\det(I-t(A_\Delta(\theta)-tD_\Delta(\theta)+t^2X_\Delta(\theta))) \\
	&= \det(T)\det(I-tA_\Delta(\theta)+t^2D_\Delta(\theta)-t^3X_\Delta(\theta)).
\end{align*}

\end{proof}

		


\section{Example} \label{sec:exam}


We consider our zeta function on $\Delta=(V,\arc)$ in Figure \ref{fig:digraph1}.

We regard $\Delta$ as a usual graph as in Example \ref{exm:1}.
For each $(v_i,v_j)\in\Phi_\Delta$ and $a\in B(v_i,v_j)$, $J(\arc(a))$, $K(\arc(a))$, and $L(\arc(a))$ are as follows:
\begin{align*}
	&J(\arc(a_{11}))=\begin{bmatrix}
		\upsilon(a_{11},a_{11})&\upsilon(a_{11},a_{12}) \\
		\upsilon(a_{12},a_{11})&\upsilon(a_{12},a_{12})
	\end{bmatrix},\\
	&K(\arc(a_{11}))=\begin{bmatrix}
		\tau_1(a_{11})&0&0 \\
		\tau_1(a_{12})&0&0
	\end{bmatrix} ,\quad 
	L(\arc(a_{11}))= \begin{bmatrix}
 	\tau_2(a_{11})&\tau_2(a_{12})\\
		0&0 \\
		0&0
 \end{bmatrix},\\
	&J(\arc(a_{21}))=\begin{bmatrix}	0&0&\upsilon(a_{21},a_{23})&\upsilon(a_{21},a_{24}) \\
		0&0&\upsilon(a_{22},a_{23})&\upsilon(a_{22},a_{24}) \\
		\upsilon(a_{23},a_{21})&\upsilon(a_{23},a_{22})&0&0 \\
		\upsilon(a_{24},a_{21})&\upsilon(a_{24},a_{22})&0&0
	\end{bmatrix},\\
	&K(\arc(a_{21}))=\begin{bmatrix}
		0&\tau_1(a_{21})&0 \\
		0&\tau_1(a_{22})&0 \\
		\tau_1(a_{23})&0&0 \\
		\tau_1(a_{24})&0&0
		\end{bmatrix},\\
	&L(\arc(a_{21}))=\begin{bmatrix}
 		\tau_2(a_{21})&\tau_2(a_{22})&0\\
		0&0&\tau_2(a_{23})&\tau_2(a_{24})\\
		0&0&0&0
		\end{bmatrix},\\
	&J(\arc(a_{31}))=\begin{bmatrix}
		0&\upsilon(a_{31},a_{32}) \\
		\upsilon(a_{32},a_{31})&0
		\end{bmatrix} ,\\
	&K(\arc(a_{31}))= \begin{bmatrix}
	 	0&0&\tau_1(a_{31}) \\
		0&\tau_1(a_{32})&0
		\end{bmatrix},\quad 
	L(\arc(a_{31}))= \begin{bmatrix}
 		0&0&0  \\
		\tau_2(a_{31})&0& 0  \\
		0&\tau_2(a_{32})&0
		\end{bmatrix},\\
	&J(\arc(a_{41}))=\begin{bmatrix}
		0&\upsilon(a_{41},a_{42}) \\
		\upsilon(a_{42},a_{41})&0 
	\end{bmatrix} ,\\
	&K(\arc(a_{41}))= \begin{bmatrix}
 		0&0&\tau_1(a_{41})  \\
		\tau_1(a_{42}) &0&0
	 \end{bmatrix},\quad 
	L(\arc(a_{41}))= \begin{bmatrix}
		\tau_2(a_{41})&0\\
		0&0\\
		0&\tau_2(a_{42}) 
	\end{bmatrix}.
\end{align*}
The matrices $A_\Delta(\theta), D_\Delta(\theta)$ and $X_\Delta(\theta)$ are given by
\begin{align*}
	&A_\Delta(\theta) = 
	\begin{bmatrix}
		\tau(a_{11},a_{11})+\tau(a_{12},a_{12}) & \tau(a_{21},a_{21})+\tau(a_{22},a_{22})&\tau(a_{41},a_{41}) \\
		\tau(a_{23},a_{23})+\tau(a_{24},a_{24})&0&\tau(a_{31},a_{31}) \\
		\tau(a_{42},a_{42})&\tau(a_{32},a_{32})&0
	\end{bmatrix}, \\
	&D_\Delta(\theta) = \begin{bmatrix}
		d_{v_1,v_1}+d_{v_1,v_2}+d_{v_1,v_3} &0&0 \\
		0&d_{v_2,v_1}+d_{v_2,v_3}&0 \\
		0&0&d_{v_3,v_1}+d_{v_3,v_2}
	\end{bmatrix}, \\ 
	&X_\Delta(\theta) = 
	\begin{bmatrix}
		0 & x_{v_1,v_2}& x_{v_1,v_3}\\
		x_{v_2,v_1}& 0 & x_{v_2,v_3}\\
		x_{v_3,v_1} & x_{v_3,v_2} & 0
	\end{bmatrix},
\end{align*}
where 
\begin{align*}
	&d_{v_i,v_j} = \left(1-t^2 \upsilon(\arc_{v_i,v_j})\upsilon(\arc_{v_j,v_i})\right)\inv 
		\left( \sum_{a\in\arc_{v_i,v_j},a'\in\arc_{v_j,v_i}} \upsilon(a,a')\tau(a',a) \right),\\
	&x_{v_i,v_j} = \left(1-t^2 \upsilon(\arc_{v_i,v_j})\upsilon(\arc_{v_j,v_i})\right)\inv 
		\left( \sum_{a,a'\in\arc_{v_i,v_j}}\sum_{a''\in\arc_{v_j,v_i}} \upsilon(a,a'')\upsilon(a'',a')\tau(a,a') \right).
\end{align*}

On the other hand, we regard $\Delta$ as a symmetric digraph of $G$ in Figure \ref{fig:graph1} as in Example \ref{exm:2}.
For $a\in B(v_i,v_j)$, the matrices $J(\arc(a)), K(\arc(a))$, and $L(\arc(a))$ are as follows:
\begin{align*}
	&J(\arc(a_{11}))=\begin{bmatrix}
		0 & \upsilon(a_{11},a_{12}) \\
		\upsilon(a_{12},a_{11}) & 0
	\end{bmatrix}, \quad 
	K(\arc(a_{11}))=\begin{bmatrix}
		\tau_1(a_{11})& 0 & 0\\
		\tau_1(a_{12})& 0 & 0
	\end{bmatrix}, \\
	&L(\arc(a_{11}))=\begin{bmatrix}
		\tau_2(a_{11})&\tau_2(a_{12}) \\
		0&0\\
		0&0
	\end{bmatrix}
,\\	
	&J(\arc(a_{21}))=\begin{bmatrix}
		0 & \upsilon(a_{21},a_{23}) \\
		\upsilon(a_{23},a_{21}) & 0
	\end{bmatrix},\quad 
	K(\arc(a_{21}))=\begin{bmatrix}
		0&\tau_1(a_{21})& 0\\
		\tau_1(a_{23})& 0 & 0
	\end{bmatrix}, \\
	&L(\arc(a_{21}))=\begin{bmatrix}
		\tau_2(a_{21})&0 \\
		 0 &\tau_2(a_{23}) \\
		0& 0 
	\end{bmatrix}
,\\
	&J(\arc(a_{22}))=\begin{bmatrix}
		0 & \upsilon(a_{22},a_{24}) \\
		\upsilon(a_{24},a_{22}) & 0
	\end{bmatrix},\quad 
	K(\arc(a_{22}))=\begin{bmatrix}
		0&\tau_1(a_{22})&0 \\
		\tau_1(a_{24})& 0 &0
	\end{bmatrix}, \\
	&L(\arc(a_{22}))=\begin{bmatrix}
		\tau_2(a_{22})& 0  \\
		0 &\tau_2(a_{24}) \\
		0 & 0 
	\end{bmatrix}
,\\	
	&J(\arc(a_{31}))=\begin{bmatrix}
		0 & \upsilon(a_{31},a_{32}) \\
		\upsilon(a_{32},a_{31}) & 0
	\end{bmatrix},\quad 
	K(\arc(a_{31}))=\begin{bmatrix}
		0& 0 &\tau_1(a_{31}) \\
		0&\tau_1(a_{32})&0
	\end{bmatrix},\\
	&L(\arc(a_{31}))=\begin{bmatrix}
		 0 & 0  \\
		\tau_2(a_{31})& 0  	\\
		 0 &\tau_2(a_{32})
	\end{bmatrix}
, \\	
	&J(\arc(a_{41}))=\begin{bmatrix}
		0 & \upsilon(a_{41},a_{42}) \\
		\upsilon(a_{42},a_{41}) & 0
	\end{bmatrix}, \quad 
	K(\arc(a_{41}))=\begin{bmatrix}
		0& 0 &\tau_1(a_{41})  \\
		\tau_1(a_{42}) & 0 &0
	\end{bmatrix}, \\
	&L(\arc(a_{41}))=\begin{bmatrix}
		\tau_2(a_{41})&0   \\
		0 & 0  \\
		0 &\tau_2(a_{42}) 
	\end{bmatrix}.
\end{align*}

The matrices $A_{\Delta(G)}(\theta)$, $D_{\Delta(G)}(\theta)$, and $X_{\Delta(G)}(\theta)$ are given by
\begin{align*} 
	&A_{\Delta(G)}(\theta) = 
	\begin{bmatrix}
		\tau(a_{11},a_{11})+\tau(a_{12},a_{12}) & \tau(a_{21},a_{21})+\tau(a_{22},a_{22})&\tau(a_{41},a_{41}) \\
		\tau(a_{23},a_{23})+\tau(a_{24},a_{24})&0&\tau(a_{31},a_{31}) \\
		\tau(a_{42},a_{42})&\tau(a_{32},a_{32})&0
	\end{bmatrix}, \\
	&D_{\Delta(G)}(\theta) = \begin{bmatrix}
		d_{v_1,v_1}+d_{v_1,v_2}+d_{v_1,v_3} &0&0 \\
		0&d_{v_2,v_1}+d_{v_2,v_3}&0 \\
		0&0&d_{v_3,v_1}+d_{v_3,v_2}
	\end{bmatrix}, \\
	&X_{\Delta(G)}(\theta) = 
	\begin{bmatrix}
		0 & x_{v_1,v_2}& x_{v_1,v_3}\\
		x_{v_2,v_1}& 0 & x_{v_2,v_3}\\
		x_{v_3,v_1} & x_{v_3,v_2} & 0
	\end{bmatrix},
\end{align*}
where 
\begin{align*}
	d_{v_i,v_j} = \sum_{a\in\arc_{v_i,v_j}} \frac{ \upsilon(a,\overline{a})\tau(\overline{a},a)}{1-t^2\upsilon(a,a)\upsilon(\overline{a},\overline{a})}, \quad
	x_{v_i,v_j} = \sum_{a\in\arc_{v_i,v_j}} \frac{\upsilon(a,\overline{a})\upsilon(\overline{a},a)\tau(a,a)}{1-t^2\upsilon(a,a)\upsilon(\overline{a},\overline{a})}.
\end{align*}

\section*{Acknowledgements}
\noindent
The author is partially supported by Grant-in-Aid for JSPS Fellows (Grant No. JP20J20590).

\noindent

\end{document}